\newtheorem{theorem}{Theorem}[section]
\newtheorem{lemma}[theorem]{Lemma}
\newtheorem{proposition}[theorem]{Proposition}
\theoremstyle{definition}
\newtheorem{example}[theorem]{Example}
\newtheorem{question}[theorem]{Question}
\newtheorem{conjecture}[theorem]{Conjecture}
\theoremstyle{remark}
\newtheorem{remark}[theorem]{Remark}
\numberwithin{equation}{section}
\newcommand{\bP}{\operatorname{\mathbb{P}}}
\newcommand{\F}{\operatorname{\mathbb{F}}}
\newcommand{\Q}{\operatorname{\mathbb{Q}}}
\newcommand{\disc}{\operatorname{disc}}
\newcommand{\Gal}{\operatorname{Gal}}
\begin{document}

\title{A Bertini type theorem for pencils over finite fields}

\author{Shamil Asgarli}
\address{Department of Mathematics, University of British Columbia, Vancouver, BC V6T 1Z2}
\email{sasgarli@math.ubc.ca}

\author{Dragos Ghioca}
\address{Department of Mathematics, University of British Columbia, Vancouver, BC V6T 1Z2}
\email{dghioca@math.ubc.ca}

\subjclass[2020]{Primary 14J70; Secondary 14C21, 14N05}
\keywords{Bertini theorem, finite fields}

\begin{abstract}
We study the question of finding smooth hyperplane sections to a pencil of hypersurfaces over finite fields. 
\end{abstract}

\maketitle

\section{Introduction}

Given a smooth projective variety $X\subset \bP^n$ over the complex numbers, the classical Bertini theorem asserts the existence of a hyperplane $H$ such that $X\cap H$ is smooth. The statement remains valid over an arbitrary infinite field $k$. For example, every smooth $\Q$-variety admits a smooth $\Q$-hyperplane section. However, if 
$k=\F_{q}$ is a finite field, there are counter-examples to the statement. The following example is due to Nick Katz \cite{Kat99}. Consider the surface $S\subset\bP^3_{\F_q}$ defined by
\[
X^q Y -  X Y^q + Z^q W - Z W^q = 0
\]
One can check that each $\F_q$-hyperplane $H\subset\bP^3$ is tangent to the surface $S$, and so $S\cap H$ is singular for every choice of $H$ in this case \cite{ADL19}*{Example 3.4}. 

If the field $\F_q$ has sufficiently large cardinality with respect to the degree of $X$, then we still expect to find smooth hyperplane sections. A theorem of Ballico \cite{Bal03} shows that for $q\geq d(d-1)^{n-1}$, any smooth hypersurface $X\subset \bP^n$ of degree $d$ admits an $\F_q$-hyperplane $H$ such that $X\cap H$ is smooth. When $X$ is a plane curve, a sharper bound of $q\geq d-1$ has been obtained under a stronger hypothesis of reflexivity \cite{Asg19}. 

We restrict our attention to the case of hypersurfaces. If $X\subset\bP^n$ is a hypersurface, we say that a given hyperplane $H$ is \emph{transverse} to $X$ if $X\cap H$ is smooth. 

In this paper, we study a pencil of hypersurfaces defined over $\F_q$ and ask for an $\F_q$-hyperplane which is simultaneously transverse to all the $\F_q$-members of the pencil. We take two different hypersurfaces $X_1 = \{F=0\}$ and $X_2=\{G=0\}$ of the same degree, and consider the $\F_q$-members of the pencil generated by $X_1$ and $X_2$. In other words, we examine the $q+1$ hypersurfaces,
$$
X_{[s:t]} = \{s F + t G = 0\}
$$ 
where $[s:t]\in\bP^1(\F_q)$. The main question can be phrased as follows:

\begin{question}
\label{quest:1}
Suppose that each member of the pencil spanned by $X_1$ and $X_2$ admits a transverse hyperplane over $\overline{\F_q}$. Provided that $q$ is sufficiently large with respect to $d$, can we find an $\F_q$-hyperplane $H$ such that $H$ is simultaneously transverse to $X_{[s:t]}$ for each $[s:t]\in\bP^1(\F_q)$?
\end{question}

The case $d=1$ is clear, because we can simply pick $H$ to be any hyperplane that is not in the pencil, and any two distinct hyperplanes intersect transversely. We assume $d>1$ throughout the paper. In a similar vein with Question~\ref{quest:1}, one may be inclined to ask for the existence of an $\overline{\F_q}$-hyperplane $H$ such that $H$ is transverse to all the $\overline{\F_q}$-members of a given pencil. However, this cannot be attained because any hyperplane $H$ must intersect some members of the pencil non-transversely. This is proved in Lemma~\ref{lem:degree}. 

Our main result asserts that the answer to Question~\ref{quest:1} is positive if we allow a base extension. The result rests on the following natural assumption on the pencil:

\textbf{Assumption on the pencil.} Suppose that $X_1, X_2\subset\bP^n$ are two hypersurfaces of degree $d$ defined over a finite field $k$. We will say that the pencil generated by $X_1$ and $X_2$ satisfies the condition \textbf{(T)} if the following hold:
\begin{enumerate}
    \item Each member of the pencil has a transverse hyperplane over $\overline{k}$.
    \item The pencil has a smooth member defined over $\overline{k}$.
\end{enumerate}

\begin{theorem}\label{main-theorem}
Let $n\geq 2$ and $d \geq 2$ be positive integers with $p \nmid n(d-1)$. Suppose that $X_1, X_2\subset\bP^n$ are two hypersurfaces of degree $d$ defined over a finite field $k$ of characteristic $p$ satisfying the assumption (\textbf{T}). Then there exists a finite field extension $k'/k$ such that the following holds: for all finite fields $\F_q\supseteq k'$, there exists an $\F_{q}$-hyperplane $H$ such that $H$ is transverse to $X_{[s,t]}$ for each $[s:t]\in\bP^1(\F_{q})$. 
\end{theorem}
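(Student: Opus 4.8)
The plan is to bound the number of ``bad'' hyperplanes by a monodromy and Chebotarev argument. Work in $\bP^1_k\times(\bP^n)^\vee_k$ and let $\mathcal{Z}$ be the reduced closed subscheme obtained as the closure of the locus of pairs $([s:t],H)$ for which $X_{[s:t]}\cap H$ is a hypersurface of $H$ carrying a singular point; the remaining pairs with $H\subseteq X_{[s:t]}$ form a subset of dimension $\le 1$, which is negligible for what follows. Write $\pi_1\colon\mathcal{Z}\to\bP^1$ and $\pi_2\colon\mathcal{Z}\to(\bP^n)^\vee$. For $\F_q\supseteq k$, call $H\in(\bP^n)^\vee(\F_q)$ \emph{bad} if it is non-transverse to $X_{[s:t]}$ for some $[s:t]\in\bP^1(\F_q)$; this happens precisely when the pair $([s:t],H)$ is an $\F_q$-point of $\mathcal{Z}$ (up to the negligible degenerate pairs). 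It therefore suffices to show the number of bad $H$ is strictly less than $\#(\bP^n)^\vee(\F_q)=q^n+q^{n-1}+\cdots+1$ once $q$ is large.

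The first step is to pin down the geometry of $\mathcal{Z}$. By (\textbf{T})(1), for every closed point $[s:t]$ the fiber $\pi_1^{-1}([s:t])$ is a proper closed subset of $(\bP^n)^\vee$, hence of dimension $\le n-1$; thus $\dim\mathcal{Z}\le n$ and every component of $\mathcal{Z}$ not dominating $\bP^1$ has dimension $\le n-1$ and lies over one of finitely many points. By (\textbf{T})(2), the locus of smooth members is a nonempty open subset of $\bP^1_{\overline k}$ whose complement is $\Gal(\overline k/k)$-stable, so it descends to a dense open subset of $\bP^1_k$; in particular the generic member $X_\eta$ over $k(\bP^1)$ is smooth, hence geometrically irreducible. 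Its non-transversality locus $X_\eta^\vee=\pi_1^{-1}(\eta)$ is then geometrically irreducible, and---invoking Lemma~\ref{lem:degree} together with the hypothesis $p\nmid n(d-1)$---it has the expected dimension $n-1$ and $\pi_2$ restricted to it is generically étale of degree $N=n(d-1)^{n-1}\ge 2$. Consequently $\mathcal{Z}$ has a unique $n$-dimensional irreducible component $\mathcal{Z}_0=\overline{\pi_1^{-1}(\eta)}$; it is geometrically irreducible, defined over $k$ (being canonically characterized), with $\pi_2|_{\mathcal{Z}_0}\colon\mathcal{Z}_0\to(\bP^n)^\vee$ generically finite of degree $N\ge2$, while $\mathcal{Z}\setminus\mathcal{Z}_0\subseteq\pi_1^{-1}(S)$ for a finite set $S\subset\bP^1(\overline k)$ of size bounded in terms of $n$ and $d$.

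Let $U_0\subseteq(\bP^n)^\vee$ be the open locus over which $\pi_2|_{\mathcal{Z}_0}$ is finite étale, and $\widetilde{\mathcal{Z}_0}\to U_0$ its Galois closure, with geometric monodromy group $G$---a transitive subgroup of the symmetric group $S_N$, transitivity coming from geometric irreducibility of $\mathcal{Z}_0$. Choose $k'$ to be a finite extension of $k$ containing the field of constants of $\widetilde{\mathcal{Z}_0}$; then for every $\F_q\supseteq k'$ the arithmetic monodromy of $\widetilde{\mathcal{Z}_0}\otimes\F_q$ over $U_0\otimes\F_q$ is again $G$. By Jordan's theorem a transitive permutation group of degree $N\ge2$ contains a fixed-point-free element, so the conjugation-invariant set $\Sigma=\{g\in G: g\text{ fixes none of the }N\text{ sheets}\}$ is nonempty; set $\delta=\#\Sigma/\#G>0$, noting that $\#G$, and hence $\delta$, is controlled by $n$ and $d$. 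Since a fiber $(\mathcal{Z}_0)_H$ over $H\in U_0(\F_q)$ has an $\F_q$-point exactly when the Frobenius class at $H$ avoids $\Sigma$, the effective Chebotarev density theorem for the étale cover $\widetilde{\mathcal{Z}_0}\otimes\F_q\to U_0\otimes\F_q$ yields
\[
\#\{H\in U_0(\F_q):(\mathcal{Z}_0)_H(\F_q)=\emptyset\}=\delta\,q^n+O_{n,d}(q^{n-1/2}).
\]

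Finally, an $\F_q$-hyperplane $H$ is bad only if either $(\mathcal{Z}_0)_H(\F_q)\neq\emptyset$, or $H$ lies in the closed set $\pi_2(\mathcal{Z}\setminus\mathcal{Z}_0)\cup\bigl((\bP^n)^\vee\setminus U_0\bigr)$ together with the (negligible) degenerate pairs; using (\textbf{T})(1) again, this closed set has dimension $\le n-1$ and degree bounded in terms of $n,d$, contributing at most $O_{n,d}(q^{n-1})$ points. Hence the number of transverse $H$ in $(\bP^n)^\vee(\F_q)$ is at least $\delta q^n-O_{n,d}(q^{n-1/2})-O_{n,d}(q^{n-1})$, which is positive for $q$ larger than an explicit bound $B(n,d)$. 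Enlarging $k'$ so that $\#k'\ge B(n,d)$, every $\F_q\supseteq k'$ satisfies $q\ge\#k'\ge B(n,d)$ and has arithmetic monodromy $G$, so a transverse hyperplane exists; this proves the theorem. The main obstacle is the structural analysis of the second paragraph: showing $\mathcal{Z}$ has a single top-dimensional component needs both that the generic member be smooth---so its dual variety is irreducible rather than splitting off a tangency component and components carried by singular points---and that this dual variety have dimension exactly $n-1$, which is where positive characteristic can misbehave and where $p\nmid n(d-1)$ enters via Lemma~\ref{lem:degree}. A secondary point is the passage to $k'$, needed so that arithmetic and geometric monodromy coincide; otherwise $\delta$ would have to be computed in the possibly larger arithmetic group, where a fixed-point-free element need not exist.
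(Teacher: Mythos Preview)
Your proof is correct and follows the same conceptual arc as the paper's---geometric irreducibility of the incidence variety, passage to the Galois closure with a base extension $k'/k$ to align arithmetic and geometric monodromy, and Jordan's lemma to locate a fixed-point-free element in the transitive group $G\subseteq S_N$---but the implementations diverge in an interesting way. The paper first slices $V$ down to a curve: it applies the Poonen--Slavov Bertini irreducibility theorem \cite{PS20} iteratively to find a line $B\cong\bP^1\subset(\bP^n)^\vee$ with $W=\pi_1^{-1}(B)$ still geometrically irreducible, and then invokes the D\`ebes--Legrand Twisting Lemma \cite{DL12} on the degree-$N$ cover $W\to B$ of curves to produce a single point $P\in B(\F_q)$ whose fiber avoids $W(\F_q)$. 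You instead stay in full dimension, working directly with the generically \'etale map $\mathcal{Z}_0\to(\bP^n)^\vee$ and appealing to the effective Chebotarev density theorem (equivalently, Lang--Weil applied to the twists of the Galois closure $\widetilde{\mathcal{Z}_0}$). Your route is more economical---it bypasses both \cite{PS20} and \cite{DL12}---and as a bonus yields a quantitative conclusion: a positive proportion $\delta=\#\Sigma/\#G$ of all $\F_q$-hyperplanes are simultaneously transverse, not merely one lying on a chosen line $B$. The paper's reduction to curves has the minor advantage that Lang--Weil for curves is the most classical case and the genus bound for $Z$ is made explicit, but both arguments ultimately control $[k':k]$ by the same quantity $N!$ with $N=n(d-1)^{n-1}$.
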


\begin{remark} The finite field extension $k'/k$ depends only on $n$ and $d$, but not on the pencil itself. This assertion will be explicitly justified in the the proof.
\end{remark}

\begin{remark} As it will be mentioned in the proof, the hypothesis $p\nmid n(d-1)$ is needed to ensure that a certain map is separable. The required separability condition would also follow if we had instead imposed the following geometric condition: there exists a hyperplane $H$ defined over $\overline{k}$ such that $H$ is tangent to $n(d-1)^{n-1}$ many \emph{distinct} hypersurfaces in the pencil (see Lemma~\ref{lem:degree} for more context).
\end{remark}

\begin{remark} The hypothesis that a pencil has at least one smooth member defined over $\overline{k}$ is fairly mild. Indeed, a pencil can be viewed as a $\bP^1$ inside the parameter space of all hypersurfaces of degree $d$ in $\bP^n$. The condition that the pencil admits a smooth member is equivalent to the statement that the corresponding $\bP^1$ is not contained inside the discriminant hypersurface $\mathcal{D}_{d, n}$, which parametrizes singular hypersurfaces of degree $d$ in $\bP^n$. A generically chosen line is not contained inside $\mathcal{D}_{d, n}$, and so a generic pencil contains a smooth member.
\end{remark}

\begin{remark} According to our definition, a hyperplane $H$ is said to be transverse to $X$ if $H$ provides a smooth hyperplane section of $X$. This condition automatically implies that $H\notin X^{\ast}$ where $X^{\ast}$ is the dual hypersurface parametrizing tangent hyperplanes to $X$. More precisely, $X^{\ast}$ is the closure of the image of the Gauss map of $X$. However, the converse implication is not true. For example a line $L$ passing through the singularity of an irreducible nodal cubic $C$ is not transverse according to our definition, but still satisfies $L\notin C^{\ast}$. Some authors, such as \cite{Bal03}, defines $H$ to be transverse when the weaker condition $H\notin X^{\ast}$ is satisfied. Note that if $X$ is smooth, then $H\notin X^{\ast}$ if and only if $X\cap H$ is smooth. Thus, for smooth hypersurfaces, these two definitions of ``transverse hyperplane" coincide.
\end{remark}

We sketch here the plan for our paper. In Section~\ref{sec:plane} we discuss our Question~\ref{quest:1} in the context of plane curves. Then we prove Theorem~\ref{main-theorem} in Section~\ref{sec:proofs}. Finally,  we conclude our paper by a brief discussion of whether we need to consider a base extension from $k$ to $k'$ as in the conclusion of Theorem~\ref{main-theorem}; in particular, we prove in Proposition~\ref{prop:plane conics} that for a pencil of reduced plane conics (with at least one smooth conic in the $\overline{\F_q}$-pencil), there always exists a common transverse line to each element of the $\F_q$-pencil (as long as $q\ge 16$).

\

\textbf{Acknowledgements.} We are grateful to Zinovy Reichstein and Dori Bejleri for very helpful discussions on the topic of this paper.

\section{Plane curves}
\label{sec:plane}

In this Section, we discuss more broadly Question~\ref{quest:1} in the context of plane curves. In particular, we show (see Proposition~\ref{prop:N is not enough}) that given any $N$ reduced plane curves of degree $d$, there exists a common $\F_q$-line transverse to each one of these $N$ curves, as long as $q\ge 2Nd(d-1)$. Therefore, it makes sense to consider our Question~\ref{quest:1} in which we search for a common $\F_q$-line transverse to each curve in a given set of $q+1$ curves.  On the other hand, we show in Example~\ref{example:1} that there exists a set of $q+1$ smooth plane curves with the property that no $\F_q$-line is simultaneously transverse to each curve in our set. Hence, this suggests even more the setup considered in Question~\ref{quest:1}  in which we consider a \emph{pencil} of plane curves, or more generally of hypersurfaces in $\bP^n$.

The setup for this Section is to have two plane curves $C_{1}=\{F=0\}$ and $C_{2}=\{G=0\}$ in $\bP^2$ defined over $\F_q$. The polynomials $F, G\in \F_{q}[x,y,z]$ are homogenous of degree $d$, and we assume that $C_{1}\cap C_{2}$ is finite, i.e. the curves $C_1$ and $C_2$ do not share any components. We consider the pencil of plane curves,
$$
C_{[s:t]} = \{sF + tG = 0\} 
$$
We are interested in finding a line $L\subset\bP^2$ defined over $\F_q$ such that $L$ is simultaneously transverse to the $q+1$ members $C_{[s:t]}$ as $[s:t]$ varies in $\bP^1(\F_q)$. Note that a line $L\subset\mathbb{P}^2$ is transverse to a curve $C\subset\mathbb{P}^2$ if and only if $L\cap C$ consists of $d=\deg(C)$ distinct points (over $\overline{\mathbb{F}_q}$).

We need the following result on the number of $\F_q$-points to an arbitrary plane curve which is used in the proof of Proposition~\ref{transverse-line-single-curve}. 

\begin{lemma} \label{lemma-rat-points-on-plane-curves} Suppose $X\subset\bP^2$ is a plane curve of degree $d$ defined over $\F_q$. Then the number of $\F_q$-points of $X$ can be bounded by: 
$$
\# X(\F_q) \leq dq+1
$$
The equality occurs if $X$ is a union of $d$ lines, each defined over $\F_q$, passing through a common $\F_q$-point $P_0$.
\end{lemma}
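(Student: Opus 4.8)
The plan is to bound $\#X(\F_q)$ by induction on the number of irreducible components of $X$ over $\F_q$, reducing to the case where $X$ is geometrically irreducible. First I would observe that if $X = X_1 \cup X_2$ where $X_1, X_2$ are curves defined over $\F_q$ of degrees $d_1, d_2$ with $d_1 + d_2 = d$, then $\#X(\F_q) \le \#X_1(\F_q) + \#X_2(\F_q)$, so if the bound $\#X_i(\F_q) \le d_i q + 1$ holds for each piece, we get $\#X(\F_q) \le (d_1 + d_2)q + 2$, which is slightly worse than claimed. To fix this off-by-one, I would instead note that $X_1(\F_q)$ and $X_2(\F_q)$ must overlap when $X$ is connected enough, or more robustly, treat the union directly: if $X$ is a union of components, order them $Y_1, \dots, Y_r$ and bound $\#(Y_i(\F_q) \setminus (Y_1 \cup \cdots \cup Y_{i-1})(\F_q))$, using that each $Y_i$ meets the union of the previous ones in at least one point when the curve is connected — but since $X$ need not be connected, the cleanest route is to separately handle the case of a union of $\F_q$-lines (giving the equality case) and use strict inequality in the geometrically irreducible case of degree $\ge 2$.

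For a geometrically irreducible plane curve $X$ of degree $d$, I would first dispose of $d = 1$ (a line has exactly $q + 1 \le q + 1$ points, consistent with the bound) and then for $d \ge 2$ use the elementary projection argument: pick a point $P_0 \in \bP^2(\F_q)$ not on $X$ (which exists since $\#\bP^2(\F_q) = q^2 + q + 1 > \#X(\overline{\F_q}) \cap \bP^2(\F_q)$ can fail in general, so instead pick $P_0$ to be any $\F_q$-point and project from it), and consider the projection $\pi \colon X \to \bP^1$ from $P_0$. Each fiber of $\pi$ over an $\F_q$-point of $\bP^1$ is contained in a line through $P_0$, which meets $X$ in at most $d$ points; hence $\#X(\F_q) \le d \cdot \#\bP^1(\F_q) = d(q+1) = dq + d$. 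This is too weak by $d - 1$. The standard sharper count: if $P_0 \notin X$, then every $\F_q$-point of $X$ lies on exactly one of the $q+1$ lines through $P_0$, and the line through $P_0$ and a given point of $X$ meets $X$ in $\le d$ points but at least one such line (in fact many) is tangent or meets $X$ in fewer points; more carefully, summing intersection multiplicities, $\sum_{L \ni P_0} \#(L \cap X)(\F_q) \le \sum_L \deg(L \cap X) = d(q+1)$ but we want to subtract the contribution of lines defined over extensions. The honest elementary bound that gives exactly $dq + 1$ is: choose $P_0 \in X(\F_q)$ if $X(\F_q) \ne \emptyset$ (if $X(\F_q) = \emptyset$ the bound is trivial), then every other $\F_q$-point of $X$ lies on a line through $P_0$, each such line contains $\le d - 1$ further points of $X$, and there are $q+1$ lines through $P_0$, giving $\#X(\F_q) \le 1 + (d-1)(q+1) = (d-1)q + d$; still off.

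I expect the main obstacle is getting the constant exactly right, and the correct classical approach (this is a known lemma, e.g. essentially due to Serre) is: take $P_0 \in \bP^2(\F_q) \setminus X$, which exists whenever $\#X(\F_q) \le q^2$ — and if $\#X(\F_q) \ge q^2 + 1$ we would already have a contradiction with a crude bound once $q$ is not too small, so one handles small cases separately; alternatively note that if $X$ passes through every $\F_q$-point of $\bP^2$ then $d \ge q+1 > $ reasonable range. With $P_0 \notin X$: the $q+1$ lines through $P_0$ partition $\bP^2(\F_q) \setminus \{P_0\}$, each line $L$ has $\#(L \cap X)(\F_q) \le \deg(L \cap X) = d$, and $X(\F_q)$ is partitioned among these lines, so $\#X(\F_q) \le d(q+1)$; to improve to $dq+1$, observe that not all $q+1$ lines can meet $X$ in $d$ distinct $\F_q$-points simultaneously unless $X$ is a union of $d$ concurrent lines through... no. The genuinely correct statement and proof I would write: reduce to $X$ reduced (removing multiplicity only decreases points... increases? — removing repeated components decreases degree and keeps all points, so WLOG $X$ reduced), then induct on components handling the union-of-lines equality case by direct count ($d$ lines through $P_0$ give $1 + d \cdot q = dq+1$ points), and for the geometrically irreducible case of degree $\ge 2$, cite/prove that $\#X(\F_q) \le dq + 1$ via the projection-from-a-point-off-$X$ argument combined with the observation that a geometrically irreducible curve of degree $d \ge 2$ cannot contain a full pencil's worth of concurrent $\F_q$-collinear $d$-tuples — the cleanest is: $\#X(\F_q) \le \deg X \cdot q + 1$ follows from $X$ having arithmetic genus bounded and applying the Weil bound is overkill; the truly elementary argument is in Serre's "Lettre à M. Tsfasman," and I would reproduce that: among the $q+1$ lines through a point $P_0 \notin X$, let $a_i$ be the number meeting $X$ in exactly $i$ rational points; then $\sum a_i = q+1$ and $\#X(\F_q) = \sum i \, a_i \le d \sum a_i - (\text{correction})$, and the correction comes from the fact that the tangent lines from $P_0$ or the lines hitting singular/non-rational intersection force $\sum i\, a_i \le dq + 1$ because at least one line must fail to contribute $d$ rational points (e.g. a line meeting $X$ in a conjugate pair), unless $X$ splits into concurrent $\F_q$-lines — and that exceptional case is exactly the stated equality case. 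So the write-up: (1) reduce to reduced $X$; (2) if $X$ is a union of $\F_q$-lines, direct count gives $\le dq+1$ with equality iff concurrent; (3) otherwise some geometric component has degree $\ge 2$ or is a conjugate line, and a short argument via projection shows the inequality is strict, hence $\le dq+1$ still holds; the main work and the thing to be careful about is step (3), ensuring the strict inequality, which I would obtain by noting such $X$ cannot have all $q+1$ lines through $P_0$ meeting it in $d$ distinct $\F_q$-points.
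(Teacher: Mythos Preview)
Your proposal never closes the gap you yourself flag as ``the main work.'' Projection from a single point $P_0\notin X$ gives only $\#X(\F_q)\le d(q+1)$, and the improvement to $dq+1$ rests entirely on your claim that ``such $X$ cannot have all $q+1$ lines through $P_0$ meeting it in $d$ distinct $\F_q$-points.'' You give no argument for this, and it is not obvious: the $d(d-1)$ tangent lines from $P_0$ to $X$ (points of the polar curve) need not be $\F_q$-rational, so there is no a priori reason one of the $q+1$ rational lines must fail to be transverse with all intersection points rational. You also do not secure the existence of $P_0\notin X$ (which can genuinely fail when $d\ge q+1$) and do not dispose of that range separately.

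The paper's proof avoids this difficulty by double-counting over \emph{all} $\F_q$-lines rather than only those through a fixed point. In the case where $X$ has no $\F_q$-line component, one counts incidences $\mathcal{I}=\{(P,L):P\in(X\cap L)(\F_q)\}$ two ways: each rational point lies on $q+1$ lines, so $\#\mathcal{I}=(q+1)\,\#X(\F_q)$; and each of the $q^2+q+1$ lines meets $X$ in at most $d$ points (B\'ezout, since $L\not\subset X$), so $\#\mathcal{I}\le(q^2+q+1)d$. This yields $\#X(\F_q)\le d\bigl(q+\tfrac{1}{q+1}\bigr)<dq+1$ once $d<q+1$, hence $\#X(\F_q)\le dq$. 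The extra factor $(q^2+q+1)/(q+1)$ versus $q+1$ is exactly what your single-point projection lacks. One then peels off $\F_q$-line components one at a time: writing $X=L_0\cup Y$, either $Y$ has no $\F_q$-line (use the bound just proved for $Y$) or it does (induct, subtracting $1$ for the shared point $L_0\cap L_1$). The case $d\ge q+1$ is handled at the outset as trivially giving $dq+1\ge\#\bP^2(\F_q)$. Your inductive decomposition by components is in the same spirit, but the base case needs the global incidence count, not projection from one point.
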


\begin{proof} Note that if $d\geq q+1$, then $dq+1 \geq q^2+q+1 = \bP^2(\F_q)$, and the claim is trivially true. Thus, we may assume that $d<q+1$. First, we prove the result in the special case when $X$ has no $\F_q$-linear component. In this case, we prove a slightly stronger bound, namely $\# X(\F_q) \leq dq$. Consider the finite set,
$$
\mathcal{I} = \{ (P, L): P\in (X\cap L)(\F_q) \text{ and } L \text{ is an } \F_q\text{-line}\}.
$$
Given that each $P\in X(\F_q)$ is contained in exactly $q+1$ lines defined over $\F_q$, we get that $\# \mathcal{I} = (q+1)\cdot \#X(\F_q)$. On the other hand, using the assumption that $X$ contains no $\F_q$-line as a component, we deduce $L\cap X$ consists of at most $d$ $\F_q$-points by Bezout's theorem. Since the number of $\F_q$-lines is $q^2+q+1$, we obtain,
$$
\#\mathcal{I} \leq (q^2+q+1) d
$$
Combining the two inequalities, we get,
\[
(q+1) \cdot \# X(\F_q) \leq (q^2+q+1) d \ \ \Rightarrow \ \ \#X(\F_q) \leq \left(q+\frac{1}{q+1}\right)d < qd + 1
\]
where in the last step we used $d<q+1$. Thus, $\# X(\F_q) \leq qd$ for every plane curve $X$ which does not contain an $\F_q$-line as a component.

Now, suppose that $X$ contains an $\F_q$-line as a component. We induct on the degree of $X$ in this case. We write $X=L_0\cup Y$ where $L_0$ is an $\F_q$-line and $Y$ is a curve of degree $d-1$. If $Y$ does not contain an $\F_q$-line, then 
$$
\# X(\F_q) \leq \# L(\F_q) + \# Y(\F_q) \leq q+1 + (d-1)q = dq + 1
$$
as desired.  If $Y$ has an $\F_q$-line $L_1$, then by induction, $\# Y(\F_q) \leq (d-1)q+1$ but the point $P\colonequals L_0\cap L_1$ is counted twice, so 
$$
\# X(\F_q) \leq \# L(\F_q) + \# Y(\F_q) - 1 \leq q+1 + ((d-1)q + 1) - 1= dq + 1
$$
which completes the proof. 
\end{proof}

We note that Lemma~\ref{lemma-rat-points-on-plane-curves} is covered by a result of Serre \cite{Ser91} who proved a similar upper bound on the number of $\F_q$-points for an arbitrary projective hypersurface in $\bP^n$. Serre's result was generalized to all projective varieties by \cite{Cou16}.

\begin{proposition}\label{transverse-line-single-curve} Let $C\subset P^2$ be a reduced plane curve of degree $d$ defined over $\F_q$. If $q\geq 2d(d-1)$, then there exists a transverse $\F_q$-line to $C$. \end{proposition}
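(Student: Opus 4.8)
The plan is to show that the $\F_q$-lines failing to be transverse to $C$ all lie on a single plane curve of controlled degree in the dual plane $(\bP^2)^{\ast}$, and then to count lines.

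First I would set up a discriminant. Write $C=\{F=0\}$ with $F\in\F_q[x,y,z]$ homogeneous of degree $d$, and for a line $L\subset\bP^2$ choose a linear parametrization $\bP^1\to L$, producing a binary form $F|_L$ of degree $d$. When $L\not\subseteq C$ this form is nonzero, its zero scheme is $L\cap C$, and — by the characterization of transversality recalled just above the statement — $L$ is transverse to $C$ exactly when $F|_L$ has $d$ distinct roots over $\overline{\F_q}$, i.e.\ exactly when $\disc(F|_L)\neq 0$; when $L\subseteq C$ we have $F|_L=0$, $\disc(F|_L)=0$, and $L$ is not transverse. In affine coordinates on $(\bP^2)^{\ast}$ the coefficients of $F|_L$ are polynomials of degree at most $d$ in those coordinates, and the discriminant of a binary form of degree $d$ is a polynomial of degree $2(d-1)$ in its $d+1$ coefficients; hence the non-transverse $\F_q$-lines are precisely the $\F_q$-points of
\[
\Delta\ :=\ \bigl\{\,[L]\in(\bP^2)^{\ast}\ :\ \disc(F|_L)=0\,\bigr\},
\]
which is cut out by a form of degree at most $2d(d-1)$ on $(\bP^2)^{\ast}$. (One sees the same bound more geometrically: a non-transverse line either passes through one of the finitely many singular points of $C$ or is tangent to a component $C_i$ at a smooth point of $C$, so $\Delta$ lies in the union of the dual curves $C_i^{\ast}$, of total degree $\le\sum_i d_i(d_i-1)\le d(d-1)$, together with the pencils of lines through the singular points of $C$, contributing degree $\le d(d-1)$; this is where the factor $2$ originates.)

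The one point demanding care is that $\Delta\neq(\bP^2)^{\ast}$, equivalently that $C$ really does admit a transverse line over $\overline{\F_q}$; this is precisely where the reducedness of $C$ enters, since e.g.\ a double line admits no transverse line at all. As $C$ is reduced and $\overline{\F_q}$ is infinite, this is the classical Bertini theorem; alternatively, the Gauss map of each component of $C$ has image of dimension at most one, so the union of the $C_i^{\ast}$ is a proper closed subset of $(\bP^2)^{\ast}$, and any line avoiding that union together with the finite singular locus of $C$ is transverse. Thus $\Delta$ is a genuine plane curve in $(\bP^2)^{\ast}$ of degree at most $2d(d-1)$.

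Finally I would count. By Lemma~\ref{lemma-rat-points-on-plane-curves} applied to $\Delta$ there are at most $2d(d-1)q+1$ non-transverse $\F_q$-lines, while there are $\#(\bP^2)^{\ast}(\F_q)=q^2+q+1$ lines over $\F_q$ in all, so the number of transverse $\F_q$-lines is at least
\[
(q^2+q+1)-\bigl(2d(d-1)q+1\bigr)\ =\ q\bigl(q+1-2d(d-1)\bigr),
\]
which is positive once $q\ge 2d(d-1)$, producing the desired line. I expect the real obstacle to lie entirely in the first two paragraphs — establishing the degree bound on $\Delta$ and, more delicately, ruling out $\Delta=(\bP^2)^{\ast}$ — after which the conclusion is an immediate point count against Lemma~\ref{lemma-rat-points-on-plane-curves}.
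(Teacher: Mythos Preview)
Your argument is correct and follows essentially the same route as the paper: form the discriminant of the restricted binary form to cut out the non-transverse locus $\Delta\subset(\bP^2)^{\ast}$ as a curve of degree at most $2d(d-1)$, then apply Lemma~\ref{lemma-rat-points-on-plane-curves} and compare with $q^2+q+1$. You are in fact slightly more careful than the paper in one place---you explicitly use reducedness (via Bertini or the Gauss map) to rule out $\Delta=(\bP^2)^{\ast}$, a point the paper leaves implicit---and your alternative geometric decomposition of $\Delta$ into dual curves plus pencils through the singular points is a nice aside, though not needed for the proof.
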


\begin{proof} Given a line $L=\{ax+by+cz=0\}\subset\bP^2$, we will show that the condition that $L$ is \emph{not} transverse to $C=\{F=0\}$ can be expressed in terms of vanishing of a certain discriminant. Indeed, we can solve for the intersection points 
$C\cap L$ by substituting $z = -(a/c) x - (b/c) y$ into the equation of $F(x,y,z)=0$ to obtain $F(x, y, -(a/c)x - (b/c)y)=0$. After homogenizing (which takes care of the possibility that $c$ could be $0$ in the above expression), the equation represents vanishing of a binary form $B_L(x,y)$ of degree $d$ in variables $x$ and $y$ with coefficients that are homogenous in variables $a, b, c$ with degree $d$. The line $L$ is non-transverse to $C$ if this binary form $B_L$ has a repeated root on $\bP^1$, i.e. the discriminant of $B_{L}$ vanishes. Since $\disc(B_{L})$ has degree $2d-2$ in the coefficients of the binary form, and the coefficients themselves are degree $d$ in variables $a, b, c$, we can view
$$
\disc(B_{L}) \in \F_{q}[a,b,c]
$$
as a homogenous form $H$ of degree $(2d-2)d = 2d(d-1)$ in variables $a, b, c$. By viewing a particular line $L$ as a point $[p:q:r]\in(\bP^2)^{\ast}$ in the dual space, we deduce that $L$ is tangent to $C$ if and only if the point $[p:q:r]$ lies on the plane curve $D=\{H=0\}$. In particular,
$$
\# \{ L\in (\bP^2)^{\ast}(\F_q) \ | \ L \text{ is a line not transverse to } C \} \leq \# D(\F_q)
$$
Since $D$ is a plane curve of degree $2d(d-1)$, the number of $\F_q$-points of $D$ can be bounded by $2d(d-1)q+1$ by Lemma \ref{lemma-rat-points-on-plane-curves}. Since the total number of  $\F_q$-lines in $\bP^2$ is $q^2+q+1$,  we will obtain a transverse $\F_q$-line to $C$ 
provided that
$$
q^2 + q + 1 > 2d(d-1)q+1
$$
This last inequality is equivalent to $q+1 > 2d(d-1)$, that is, $q\geq 2d(d-1)$. 
\end{proof} 

Using the same idea as in the previous proposition, we obtain:

\begin{proposition}
\label{prop:N is not enough} Let $C_1, C_2, ..., C_{N}$ be $N$ reduced plane curves of degree $d>1$ in $\bP^2$ defined over $\F_q$. If $q\geq 2N d(d-1)$, then there exists a common $\F_q$-line which is simultaneously transverse to $C_i$ for each $1\leq i\leq N$. \end{proposition}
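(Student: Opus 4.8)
The plan is to run the discriminant construction from the proof of Proposition~\ref{transverse-line-single-curve} once for each of the $N$ curves and then package the resulting ``bad loci'' in the dual plane into a single plane curve. First, fix $1\le i\le N$ and write $C_i=\{F_i=0\}$ with $F_i\in\F_q[x,y,z]$ homogeneous of degree $d$. Exactly as before, substituting a line $L=\{ax+by+cz=0\}$ into $F_i$ and homogenizing yields a binary form $B_{L,i}(x,y)$ of degree $d$ whose coefficients are homogeneous of degree $d$ in $a,b,c$, so that $L$ is non-transverse to $C_i$ precisely when $\disc(B_{L,i})=0$; this exhibits a homogeneous form $H_i\in\F_q[a,b,c]$ of degree $(2d-2)d=2d(d-1)$ with the property that $L$ is not transverse to $C_i$ if and only if the point $[a:b:c]\in(\bP^2)^{\ast}$ lies on $D_i=\{H_i=0\}$. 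Because $C_i$ is reduced, a generic line of $\bP^2$ meets it transversally over $\overline{\F_q}$ --- the tangent lines along the smooth locus sweep out the dual curve, and the lines through the finitely many singular points form a finite union of pencils, both proper closed subsets of $(\bP^2)^{\ast}$ --- so $H_i\not\equiv 0$.

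Next I would form the product $H=\prod_{i=1}^N H_i$, a nonzero homogeneous form of degree $2Nd(d-1)$, and consider the plane curve $D=\{H=0\}\subset(\bP^2)^{\ast}$. By construction an $\F_q$-line $L$ is simultaneously transverse to all of $C_1,\dots,C_N$ exactly when the corresponding dual point lies outside $D(\F_q)$, so it suffices to produce an $\F_q$-point of $(\bP^2)^{\ast}$ off $D$. Applying Lemma~\ref{lemma-rat-points-on-plane-curves} to the degree-$2Nd(d-1)$ curve $D$ gives
$$\#D(\F_q)\le 2Nd(d-1)\,q+1,$$
while the dual plane has $q^2+q+1$ points over $\F_q$; hence a common transverse $\F_q$-line exists whenever
$$q^2+q+1>2Nd(d-1)\,q+1,$$
which rearranges to $q+1>2Nd(d-1)$, i.e. $q\ge 2Nd(d-1)$.

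I do not expect a genuine obstacle here: the proposition is a direct extension of Proposition~\ref{transverse-line-single-curve}. The only points needing care are that each $H_i$ is truly nonzero --- this is exactly where reducedness of $C_i$ is used, via the generic-transversality remark above --- and that passing from the union $\bigcup_i D_i$ to the product curve $D$ costs nothing, since any common factors shared among the $H_i$ only decrease $\#D(\F_q)$, leaving the degree estimate $2Nd(d-1)$ valid in Lemma~\ref{lemma-rat-points-on-plane-curves}. A cruder union bound $\#\bigcup_i D_i(\F_q)\le\sum_i\#D_i(\F_q)\le N\bigl(2d(d-1)q+1\bigr)$ would also work, but it produces the additive constant $N$ rather than $1$, hence a marginally worse hypothesis on $q$; the product version is what reproduces the stated threshold exactly.
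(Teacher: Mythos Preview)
Your proof is correct and follows the same discriminant construction as the paper. The only difference is in the final bookkeeping: the paper applies Lemma~\ref{lemma-rat-points-on-plane-curves} to each $D_i$ separately and then uses exactly the ``cruder union bound'' you mention at the end, obtaining $N\bigl(2d(d-1)q+1\bigr)$ non-transverse lines and then verifying that $q^2+q+1>N\bigl(2d(d-1)q+1\bigr)$ still holds once $q\ge 2Nd(d-1)$; you instead multiply the $H_i$ together and apply the lemma once to the degree-$2Nd(d-1)$ product curve, which makes the arithmetic at the end a one-liner. One small correction to your closing comment: the union-bound route does \emph{not} in fact lead to a worse threshold on $q$---the paper checks that the same bound $q\ge 2Nd(d-1)$ suffices there too (using $q+1>N$, which follows from $d\ge 2$); your product packaging simply reaches that threshold more directly.
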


\begin{proof} As in the proof of the previous proposition, we obtain that the number of non-transverse $\F_q$-lines to $C_i$ is at most $2d(d-1)q+1$. Thus, the number of lines that are non-transverse to at least one of the curves $C_1, C_2, ..., C_N$ is at most 
$N\cdot (2d(d-1)q+1)$. So, we will obtain a common transverse $\F_q$-line to all $C_i$ if 
$$
q^2 + q + 1 > N\cdot (2d(d-1)q+1)
$$
This inequality will be satisfied for $q \geq 2Nd(d-1)$ according to the following computation.
\begin{align*}
q^2 + q + 1 &= q(q+1)+1 \geq q(2Nd(d-1) + 1)+1 \\
&= 2Nd(d-1)q + q+1 > 2Nd(d-1)q + N = N\cdot (2d(d-1)q+1)
\end{align*}
where in the last inequality we used the fact that $q+1>N$ which is valid under the assumption $q \geq 2d(d-1)N$. 
\end{proof}

However, if the number of curves depend also on $q$, then the existence of a simultaneous transverse $\F_q$-line is not guaranteed. 

\begin{proposition}\label{example-singular-curves}
For each $d\geq 2$, there exist $q+1$ plane curves $C_1, C_2, ..., C_{q+1}$ of degree $d$ such that there is no $\F_q$-line which is transverse to each $C_i$. 
\end{proposition}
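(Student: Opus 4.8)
The plan is to force the union of the non-transverse lines of the family to exhaust all of $(\bP^2)^{\ast}(\F_q)$, exploiting the elementary fact that the $q+1$ rational points lying on a single line already meet every $\F_q$-line. Concretely, fix an $\F_q$-line $\ell\subset\bP^2$ and enumerate its rational points $\ell(\F_q)=\{P_1,\dots,P_{q+1}\}$. For each $i$ I take $C_i$ to be any degree-$d$ plane curve defined over $\F_q$ which is singular at $P_i$; for instance $C_i$ may be chosen as the union of two distinct $\F_q$-lines through $P_i$ together with an arbitrary $\F_q$-curve of degree $d-2$ (the last factor being absent when $d=2$), and the $C_i$ may be taken pairwise distinct.

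The proof then splits into one observation and a short case analysis. The observation is that if $C$ is a curve of degree $d$ with $\operatorname{mult}_P(C)\ge 2$ and $L$ is a line through $P$, then $L$ is not transverse to $C$: either $L$ is a component of $C$, in which case $L\cap C$ is infinite, or $L\cap C$ is a length-$d$ scheme by Bezout in which $P$ alone contributes intersection multiplicity $I(P,L\cap C)\ge\operatorname{mult}_P(C)\ge 2$, so $L\cap C$ is supported on at most $d-1$ points and hence is not $d$ distinct points. Granting this, let $L$ be an arbitrary $\F_q$-line. If $L=\ell$, then $P_1\in L$ and $\operatorname{mult}_{P_1}(C_1)\ge 2$, so $L$ is not transverse to $C_1$. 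If $L\ne\ell$, then $L\cap\ell$ is a single point, which is $\F_q$-rational and therefore equals some $P_i$, and since $\operatorname{mult}_{P_i}(C_i)\ge 2$ with $P_i\in L$ the observation shows $L$ is not transverse to $C_i$. In either case some member of the family is not transverse to $L$, which is exactly the claim.

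I do not expect any genuine obstacle; the one thing to notice is that the rational points of a single line form a blocking set, after which the ``one singular point per point of $\ell$'' design does the rest. Two minor remarks. When $q\ge d-1$ one can take all the $C_i$ reduced --- let $C_i$ be a union of $d$ distinct $\F_q$-lines, at least two of them passing through $P_i$ --- whereas for smaller $q$ one simply allows the auxiliary degree-$(d-2)$ factor to be non-reduced, which is harmless since the statement imposes no reducedness hypothesis. Finally, this is in contrast with Proposition~\ref{prop:N is not enough}: a \emph{fixed} number $N$ of curves can always be simultaneously dodged once $q\ge 2Nd(d-1)$, but here the number of curves equals $q+1$ and grows with $q$, so that counting argument no longer applies.
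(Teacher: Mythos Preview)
Your argument is correct and is essentially the same construction as the paper's: fix an $\F_q$-line, place a singular point of $C_i$ at the $i$-th rational point of that line, and observe that every $\F_q$-line passes through one of these singular points. The only differences are cosmetic --- you give an explicit model for $C_i$, spell out the intersection-multiplicity observation, and separate the case $L=\ell$ --- none of which changes the underlying idea.
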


\begin{proof}
Fix an $\F_q$-line $L_0$ in $\bP^2$. After enumerating the $q+1$ $\F_q$-points $P_1, P_2, ..., P_{q+1}$ on $L_0=\bP^1$, construct the curve $C_i$ such that $C_i$ is \emph{any} given degree $d$ curve that is singular at the point $P_i$. The resulting collection of curves $C_1, ..., C_{q+1}$ satisfy the conclusion of the claim. Indeed, each $\F_q$-line $L$ meets $L_0$ at a unique point $P_i\in L_0$ (depending on $L$), and so $L$ passes through the singular point of $C_i$, implying that $L$ is not transverse to $C_i$. Thus, no $\F_q$-line $L$ can be simultaneously transverse to all the $q+1$ curves $C_{1}, C_{2}, ..., C_{q+1}$.
\end{proof}

It would be more satisfying to have examples of smooth curves satisfying the conclusion of Proposition~\ref{example-singular-curves}. We conjecture that such a collection of $q+1$ curves exist. 

\begin{conjecture}\label{conjecture-example-smooth-curves}
For each $d\geq 2$, there exist $q+1$  \emph{smooth} curves $C_1, C_2, ..., C_{q+1}$ in $\bP^2$ of degree $d$ such that there is no $\F_q$-line which is transverse to each $C_i$. 
\end{conjecture}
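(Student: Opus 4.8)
The plan is to imitate the construction in Proposition~\ref{example-singular-curves}, but to replace each singular curve $C_i$, which is forced to be tangent to every line through $P_i$, by a \emph{smooth} curve whose dual curve $C_i^{\ast}$ still contains all $\F_q$-lines through a fixed $\F_q$-point $P_i\in L_0$. Concretely, fix an $\F_q$-line $L_0\subset\bP^2$ and enumerate its $\F_q$-points $P_1,\dots,P_{q+1}$. For each $i$, one wants a smooth plane curve $C_i$ of degree $d$ such that the pencil of lines through $P_i$ (a $\bP^1$ inside $(\bP^2)^\ast$) lies inside the dual curve $C_i^\ast$; equivalently, every $\F_q$-line through $P_i$ is tangent to $C_i$. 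Since each $\F_q$-line $L$ meets $L_0$ in exactly one $\F_q$-point $P_i$, the line $L$ is then tangent to $C_i$, hence not transverse to $C_i$, and no $\F_q$-line is simultaneously transverse to all $q+1$ curves. The whole difficulty is thus concentrated in producing, for a prescribed point $P$, a smooth degree-$d$ curve all of whose lines through $P$ are tangent lines.

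First I would reduce to a normal form: after an $\F_q$-change of coordinates send $P$ to $[0:0:1]$, so that lines through $P$ are the lines $\{\beta x - \alpha y = 0\}$, parametrized by $[\alpha:\beta]\in\bP^1$. Writing $F = \sum_{j=0}^d z^{d-j} F_j(x,y)$ with $F_j$ binary of degree $j$, the restriction of $F$ to the line $x=\alpha u$, $y=\beta u$ is a binary form in $(u,z)$ whose $\disc$ in $(u,z)$ must vanish identically in $[\alpha:\beta]$. One natural way to force this: arrange that along every such line the restricted form has a repeated factor by construction. For instance, if one can write $F = A^2 B + (\text{terms vanishing to order }\ge 2 \text{ along all lines through }P)$ this would work, but more robustly I would look for $F$ of the special shape $F = G(x,y)^2 H(x,y,z) + x^a y^b(\cdots)$, or better, mimic Katz's example: take $F$ so that the polar/Gauss map composed with projection from $P$ is inseparable or ramified everywhere. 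The cleanest candidate is to use the Frobenius: over $\F_q$ set, say for $d = q+1$,
\[
F = x z^q - x^q z + y^{q+1},
\]
or a suitable analogue, and check smoothness via the Jacobian criterion while checking that every line through the chosen point meets $F$ with a repeated intersection point. For general $d$ one would look for a smooth curve admitting a ``strange'' behaviour relative to one point, and I expect the examples to exist only for suitable congruences on $d$ relative to $p$, or after adjusting $q$.

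The key steps, in order, would be: (1) fix $L_0$ and reduce the problem, by a projective change of coordinates, to constructing one smooth curve $C$ of degree $d$ with a distinguished $\F_q$-point $P$ such that every $\F_q$-line through $P$ is tangent to $C$; (2) translate ``every line through $P$ is tangent'' into the identical vanishing, as a polynomial in $[\alpha:\beta]$, of the discriminant of the binary form $F(\alpha u,\beta u, z)$; (3) exhibit an explicit family of $F$'s (built from additive/Frobenius polynomials as in Katz's construction, so that the relevant derivative identically vanishes) satisfying this; (4) verify smoothness of $C = \{F=0\}$ via the Jacobian criterion, which will be where the characteristic hypotheses enter; (5) assemble the $q+1$ translated copies $C_i$ (one for each $P_i\in L_0$) and conclude as above that no $\F_q$-line is transverse to all of them.

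The main obstacle is step (3)–(4) together: producing a curve that is simultaneously \emph{smooth} and ``omni-tangent'' from a point is exactly the tension exploited by Katz's surface example, and it is not clear such curves exist for every $d$ and every $q$ — which is precisely why the statement is a conjecture rather than a proposition. I would expect that a clean construction exists when $d \equiv 1 \pmod p$ (so that the additive-polynomial trick is available) and that the general case requires either a dimension count on the space of degree-$d$ forms with prescribed tangency along a pencil of lines, intersected with the complement of the discriminant hypersurface $\mathcal{D}_{d,2}$, or a more delicate explicit family; establishing non-emptiness of that intersection is the real content.
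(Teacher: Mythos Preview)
This statement is a \emph{conjecture} in the paper; the paper does not prove it in general, only the case $d=2$ with $p>2$ (Example~\ref{example:1}). So there is no proof of the full statement to compare against.

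For $d=2$ the paper's argument is quite different from yours. Rather than forcing each $C_i$ to be tangent to \emph{every} $\F_q$-line through a fixed point $P_i$, the paper builds the duals first: it constructs a pencil of smooth conics $D_{[s:t]}$ through four base points (three of them forming a single $\Gal(\F_{q^3}/\F_q)$-orbit, the fourth an $\F_q$-point) so that the $q+1$ members defined over $\F_q$ are all smooth and their $\F_q$-points together cover $\bP^2(\F_q)$; then $C_i\colonequals D_i^\ast$ are smooth conics whose tangent lines together exhaust all $\F_q$-lines.

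Your plan, by contrast, has a genuine obstruction that kills it in the interesting range $q\gg d$. For a smooth irreducible curve $C$ of degree $d$, the dual $C^\ast$ is an irreducible plane curve of degree at most $d(d-1)$, and the $\F_q$-lines through $P$ that are tangent to $C$ are exactly the $\F_q$-points of $\check P\cap C^\ast$, where $\check P\subset(\bP^2)^\ast$ is the line parametrizing all lines through $P$. Unless $\check P\subset C^\ast$ (which, by irreducibility, would force $C^\ast=\check P$ to be a line), B\'ezout gives at most $d(d-1)$ such tangent lines; so once $q+1>d(d-1)$ no smooth degree-$d$ curve can have \emph{every} $\F_q$-line through $P$ tangent to it. (You conflate ``$\check P\subset C^\ast$'' with ``all $\F_q$-lines through $P$ are tangent''; the former is strictly stronger, and neither is attainable here.) Your explicit candidate $xz^q-x^qz+y^{q+1}$ already illustrates the failure: the line $y=0$ through $[0{:}0{:}1]$ meets it in the $q+1$ distinct points $[0{:}0{:}1]$, $[1{:}0{:}0]$, and $[1{:}0{:}\zeta]$ for $\zeta\in\F_q^\times$, hence is transverse. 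The heuristic of routing all $q+1$ lines through $P_i$ to a single $C_i$ asks far too much of each curve; the paper's $d=2$ construction instead spreads the $q^2+q+1$ lines among the $q+1$ dual conics so that each absorbs roughly $q+1$ of them, and any proof of the general conjecture will presumably need a similar sharing mechanism rather than the pointwise one you propose.
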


We can prove the conjecture in the special case when $d=2$. 

\begin{example}
\label{example:1} Suppose that the characteristic of the field is $p> 2$. We want to construct $q+1$ smooth conics $C_1, ..., C_{q+1}$ such that each $\F_q$-line $L$ in $\bP^2$ is tangent to at least one of $C_i$. The set of tangent lines to a given smooth conic $C$ is parametrized by the dual curve $C^{\ast}$ which also has degree $d(d-1)=2$. The condition that no $\F_q$-line is transverse to all of $C_1, ..., C_{q+1}$ can be translated into the statement that the $\F_q$-points of the corresponding dual curves $C_{1}^{\ast}, ..., C_{q+1}^{\ast}$ fill up all the $\F_q$-points of $(\bP^2)^{\ast}$. 

Motivated by the observation above, we proceed to construct $q+1$ smooth conics $D_{1}, D_{2}, ..., D_{q+1}$ such that 
$$
\bigcup_{i=1}^{q+1} D_i(\F_q) = \bP^2(\F_q)
$$
Consider the collection of $4$ points $\{P_1, P_2, P_3, P_4\} \subset \bP^2(\overline{\F_q})$ such that $\{P_1, P_2, P_3\}$ is a $\Gal(\F_{q^3}/\F_{q})$-orbit of the point $P_1\in\bP^2(\F_{q^3})$, while $P_4\in \bP^2(\F_q)$. In other words, if we write
$P_1 = [a: b: c]\in\bP^2(\F_{q^3})$, then $P_2 = [a^q : b^q: c^q]$ and $P_3 = [a^{q^2} : b^{q^2}: c^{q^2}]$. 

Furthermore, we can pick the collection $B \colonequals \{P_1, P_2, P_3, P_4\}$ in such a way that no three of $P_i$ are collinear. The vector space of homogeneous quadratic polynomials in 3 variables passing through $B$ has dimension $6-4=2$, and so we get a pencil of conics with base locus $B$. If $\{F_1, F_2\}$ is an $\F_q$-basis for this vector space, then we consider the $q+1$ members of the pencil,
$$
D_{[s, t]} \colonequals \{sF_1 + t F_2 = 0\}
$$
where $[s, t]\in\bP^1(\F_q)$. We claim that each $D_{[s:t]}$ is smooth. Indeed, there are only three singular conics (geometrically) in this pencil, and they are union of two lines passing through $B=\{P_1, P_2, P_3, P_4\}$. Using the notation $\overline{PQ}$ for the line passing through $P$ and $Q$, these 3 singular conics are:
\begin{align*}
S_1 &\colonequals \overline{P_1 P_2} \cup \overline{P_3 P_4} \\
S_2 &\colonequals \overline{P_2 P_3} \cup \overline{P_1 P_4} \\
S_3 &\colonequals \overline{P_1 P_3} \cup \overline{P_2 P_4}
\end{align*}
However, none of the $S_i$ for $1\leq i\leq 3$ is defined over $\F_q$. In fact, $S_1$ is strictly defined over the field $\F_{q^3}$, and Frobenius action sends $S_1 \to S_2 \to S_3 \to S_1$, and so $\{S_1, S_2, S_3\}$ is a Galois orbit of the Frobenius. In particular,  each $D_{[s:t]}$ is a smooth conic, and together they cover the $\F_q$-points of $\bP^2$. Indeed, on one hand, they all pass through $P_4\in \bP^2(\F_q)$; on the other hand, for each $P\in \bP^2(\F_q)\setminus \{P_4\}$, the conic $D_{[-F_2(P), F_1(P)]}$ passes through $P$. We re-label the elements of the pencil,
$$
\{ D_{[s:t]} \ | \ [s, t]\in \bP^1(\F_q) \} = \{D_1, D_2, ..., D_{q+1}\}
$$
So $D_1, ..., D_{q+1}$ are smooth conics which together cover the set $\bP^2(\F_q)$. Finally, we let $C_{i} = (D_{i})^{\ast}$ to be the corresponding dual curve for each $1\leq i \leq q+1$. By reflexivity, we have $D_{i} = (C_{i})^{\ast}$, and so the tangent lines to $C_{i}$ for $1\leq i\leq q+1$ together cover all the $\F_q$-lines of $\bP^2$, i.e. the collection of smooth conics $C_1, ..., C_{q+1}$ admit no common transverse $\F_q$-line.
\end{example}

\section{Main Result}
\label{sec:proofs}

In order to establish Theorem~\ref{main-theorem}, we will need the following lemma.

\begin{lemma}\label{lem:degree} Consider a pencil of hypersurfaces generated by $X_1$ and $X_2$ in $\bP^n$ defined over $k$.  Given a hyperplane $H\subset\bP^n$, either $H$ is non-transverse to every $\overline{k}$-member of the pencil, or $H$ is non-transverse to exactly $n(d-1)^{n-1}$ members of the pencil, counted with appropriate multiplicities.
\end{lemma}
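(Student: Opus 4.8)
The plan is to reduce the statement to a Bezout count for the discriminant hypersurface of degree-$d$ hypersurfaces in $\bP^{n-1}$.

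First I would fix coordinates on $\bP^n$ so that $H = \{x_n = 0\}$, identify $H$ with $\bP^{n-1}$ having coordinates $x_0, \dots, x_{n-1}$, and pass to the restrictions $f := F|_H$ and $g := G|_H$, which are homogeneous of degree $d$ in the $n$ variables $x_0, \dots, x_{n-1}$. The elementary point is that the scheme $X_{[s:t]} \cap H$ is exactly the hypersurface of $H \cong \bP^{n-1}$ cut out by $sf + tg$, so that $H$ is transverse to $X_{[s:t]}$ if and only if the form $sf + tg$ defines a smooth hypersurface of $\bP^{n-1}$. I would then set aside the degenerate situation in which $f$ and $g$ are linearly dependent over $\overline{k}$ — equivalently, $H$ is contained in a member of the pencil — and assume henceforth that $H$ lies in no member, so that $f,g$ are linearly independent and $[s:t] \mapsto [sf+tg]$ is a linear embedding of $\bP^1$ into the projective space $\bP^N$ of degree-$d$ forms on $\bP^{n-1}$.

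Next I would invoke the discriminant: there is a homogeneous polynomial $\Delta$ in the coefficients of a degree-$d$ form in $n$ variables, of degree $n(d-1)^{n-1}$, whose vanishing on a form $P$ is equivalent to $\{P=0\}\subset\bP^{n-1}$ being singular. Since the coefficients of $sf+tg$ are linear in $s,t$, the binary form
\[
\Phi(s,t) := \Delta(sf+tg) \in \overline{k}[s,t]
\]
is homogeneous of degree $n(d-1)^{n-1}$, and a member $X_{[s:t]}$ fails to be transverse to $H$ precisely when $\Phi(s,t)=0$. If $\Phi \equiv 0$ we are in the first alternative; otherwise $\Phi$ is a nonzero binary form of degree $n(d-1)^{n-1}$, hence has exactly $n(d-1)^{n-1}$ roots on $\bP^1$ counted with multiplicity, and recording each non-transverse member with the multiplicity of the corresponding root of $\Phi$ gives the second alternative. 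In geometric language this is the same as saying that the pencil is a line $\ell\subset\bP^N$, the non-transverse members form $\ell\cap\mathcal{D}$ for $\mathcal{D}$ the degree-$n(d-1)^{n-1}$ discriminant hypersurface, and Bezout's theorem yields the dichotomy $\ell\subseteq\mathcal{D}$ versus $\#(\ell\cap\mathcal{D}) = n(d-1)^{n-1}$ counted with multiplicity.

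I expect the only substantive point to pin down to be the input I keep quoting: that the discriminant of a degree-$d$ form in $n$ variables is a polynomial of degree exactly $n(d-1)^{n-1}$ in its coefficients and that its vanishing detects singularity over an arbitrary field, including in positive characteristic. This is classical — it amounts to identifying $\mathcal{D}$ with the projective dual of the $d$-uple Veronese embedding of $\bP^{n-1}$ and applying the standard formula for the degree of that dual variety (as in Gelfand--Kapranov--Zelevinsky) — but it is the step where characteristic phenomena could in principle intrude, and it deserves a careful statement. By contrast, the reduction to the linearly independent case and the translation between singularity of $X_{[s:t]}\cap H$ and vanishing of $\Phi$ are routine, as is the Bezout bookkeeping.
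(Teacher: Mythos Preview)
Your approach is essentially the paper's: restrict the pencil to $H\cong\bP^{n-1}$, then intersect the resulting line in the parameter space with the discriminant hypersurface $\mathcal{D}_{d,n-1}$, whose degree the paper cites as $n(d-1)^{n-1}$ from \cite{EH16}*{Proposition~7.4} rather than GKZ. The degenerate case you set aside (where $f,g$ are dependent, say $g=\lambda f$) is in fact absorbed by your own $\Phi$ formalism---one has $\Phi(s,t)=(s+\lambda t)^{n(d-1)^{n-1}}\Delta(f)$ there---so no residual case remains; the paper's proof likewise does not treat it separately.
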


\begin{proof} We have $X_1 = \{F_1 = 0\}$ and $X_2=\{F_2=0\}$ where $F_1, F_2\in \F_q[x_0, ..., x_n]$ are homogeneous polynomials of degree $d$. By definition, the elements of the pencil are of the form $X_{[s:t]} = \{sF_1 + t F_2 = 0\}$ as $[s:t]$ varies in $\bP^1$. Suppose that $H$ is an arbitrary hyperplane in $\bP^n$. After a linear change of coordinates, we may assume that $H=\{x_n=0\}$. We can restrict the original pencil to the hyperplane $H$ to obtain a new pencil whose elements are of the form,
$$
\tilde{X}_{[s:t]} = \{ s F_1(x_0, x_1, ..., x_{n-1}, 0) + t F_2 (x_0, x_1, ..., x_{n-1}, 0) = 0 \}
$$
which can be viewed as a pencil of hypersurfaces in $\bP^{n-1}$. Note that $H$ is transverse to $X_{[s:t]}$ if and only if $\tilde{X}_{[s:t]}=X_{[s: t]}\cap H$ is smooth. Thus, our task has been reduced to understanding how many of $\tilde{X}_{[s:t]}$ are singular. Let $\mathcal{D}_{d, n-1}$ be the discriminant hypersurface parametrizing singular hypersurfaces of degree $d$ in $\bP^{n-1}$, and $\mathcal{P}\cong \bP^1$ be the pencil whose members are $\tilde{X}_{[s:t]}$. Either $\mathcal{P}\subset \mathcal{D}_{d, n-1}$ or $\mathcal{P}\not\subset \mathcal{D}_{d, n-1}$. In the first case, $H$ is non-transverse to every member $X_{[s:t]}$ of the original pencil. In the second case, the number of the singular members of $\mathcal{P}$ is given by the degree of the discriminant $\mathcal{D}_{d, n-1}$, which is $n(d-1)^{n-1}$ according to \cite{EH16}*{Proposition 7.4}. Thus, $H$ is non-transverse to exactly $n(d-1)^{n-1}$ members of the original pencil, counted with multiplicity.  \end{proof}

We are now ready to present the proof of the main result.

\begin{proof}[Proof of Theorem~\ref{main-theorem}.]
We have a pencil of hypersurfaces generated by $X_1$ and $X_2$ such that the generic member of the pencil is smooth. Given $\zeta\in\bP^1$, we will denote by $X_{\zeta}$ to be the corresponding member of the pencil. Consider the variety,
$$
V = \{ (H, \zeta) \ | \ H \text{ is not transverse to } X_{\zeta} \} \subset (\bP^n)^{\ast} \times \bP^1
$$
We claim that $V$ is a geometrically irreducible variety. Consider the second projection $\pi_2\colon V\to\bP^1$. Since the generic member of the pencil is smooth, it follows that the generic fiber is irreducible. Indeed, if $X_{\zeta}$ is smooth, then the fiber 
$$
\pi_2^{-1}(\zeta) = \{ H \in (\bP^n)^{\ast} \ | \ H \text{ is tangent to } X_{\zeta} \} = (X_{\zeta})^{\ast}
$$
is the dual hypersurface, which is geometrically irreducible as it is the closure of the image of the irreducible hypersurface $X_{\zeta}$ under the Gauss map. Since $\pi_2: V\to\bP^1$ has geometrically irreducible fibers over an open set $U\subset \bP^1$ and $V$ is equidimensional (in fact, $V$ is a hypersurface because it can be seen as the dual hypersurface of the generic element of the pencil), it follows that $V$ is geometrically irreducible. 

Now, we consider the projection $\pi_1\colon V\to (\bP^n)^{\ast}$. Note that $\pi_1$ is surjective, because any chosen hyperplane is non-transverse to at least one element of the pencil by Lemma~\ref{lem:degree}. In fact, Lemma~\ref{lem:degree} shows 
that a fiber of $\pi_1$ either consists of $n(d-1)^{n-1}$ points (which is the generic case) or is an entire $\bP^1$. Let
$$
Z = \{P\in (\bP^n)^{\ast} \ | \ \pi_1^{-1}(P) = \bP^1\}
$$
consist of those hyperplanes $P$ that are simultaneously non-transverse to all the members of the pencil. In particular, such a hyperplane $P\in X_{1}^{\ast}\cap X_{2}^{\ast}$ for any two smooth members $X_1, X_2$ of the pencil. This shows that 
$Z\subset X_{1}^{\ast}\cap X_{2}^{\ast}$ and therefore $\dim(Z)\leq n-2$. In particular, $Z$ is a proper Zariski-closed subset in $(\bP^n)^{\ast}$. Since $V$ is geometrically irreducible, we can apply \cite{PS20}*{Theorem 1.8} to deduce that the locus 
$$
M_{\text{bad}} = \{ \text{hyperplanes } H \subset (\bP^n)^{\ast} \ | \ \pi_1^{-1}(H) \text{ is not geometrically irreducible}\}
$$
differs from a proper Zariski-closed subset by at most a constructible set of dimension $1$. As a result, $M_{\text{bad}}\neq (\bP^n)^{\ast}$. Thus, there exists a hyperplane $\mathcal{H} \hookrightarrow (\bP^n)^{\ast}$ such that $\mathcal{H}\notin M_{\text{bad}}$. Thus, we obtain a map $\pi_{1}\colon \pi_{1}^{-1}(\mathcal{H}) \to \mathcal{H}$. We apply \cite{PS20}*{Theorem 1.8} again to this new morphism, and continue inductively until we find a line $B = \bP^1 \subset \bP^{n-1}$ such that $W \colonequals \pi_{1}^{-1}(B)$ is a geometrically irreducible curve. Let $k_1/k$ be a finite field extension such that $B$ and $W$ are defined over $k_1$. We claim that $[k_1:k]$ depends only on $n$ and $d$. Indeed, $M_{\text{bad}}$ is a proper closed set whose degree and dimension are bounded by $n$ and $d$. Thus, Lang-Weil theorem ensures the existence of an $\F_q$-point in $(\bP^n)^{\ast}\setminus M_{\text{bad}}$ for $q$ sufficiently large with respect to $n$ and $d$. The same observation is true for each iteration of the inductive process, explaining why the degree $[k_1:k]$ depends only on $n$ and $d$.

We obtain a finite map $f: W \to B \cong\bP^1$ of geometrically irreducible curves over the field $k_1$; its degree is $m:=\deg(\pi_1)=n(d-1)^{n-1}$ by Lemma~\ref{lem:degree}, which is larger than $1$. Furthermore, the map is separable due to the hypothesis $p\nmid n(d-1)$. Note that $B\subset (\bP^{n})^{\ast}$, so a point $P\in B$ will correspond to a hyperplane $P$ in $\bP^n$. The fiber $f^{-1}(P)$ above a given point
$P\in B$ will be:
$$
f^{-1}(P) = \left\{ \zeta\in\bP^1 \ | \ P \text{ is non-transverse to } X_{\zeta} \right\}
$$
which is a finite set inside $\bP^1$.

Using the formulation above, we observe that a given $\F_q$-hyperplane $P\in B$ is simultaneously transverse to all the $\F_q$-members of the pencil generated by $X_1$ and $X_2$ if and only if the 
fiber $f^{-1}(P)$ contains no $\F_q$-points of $\bP^1$. In order to show the existence of such a point $P$, we will apply the Twisting Lemma of D\`ebes and Legrand \cite{DL12} to the cover $W/B$ after applying a suitable base extension. Note that $f: W\to B$ is a cover of geometrically irreducible curves; so, there exists a finite extension $k'/k_1$ such that the base extension of the cover $W_{k'}/B_{k'}$ has a regular Galois cover $Z_{k'}/B_{k'}$. More explicitly, $k'$ is the closure of $k_1$ inside the function field $Z(k_1)$. We also note that for any finite field $\F_q\supseteq k'$, it is still true that $Z_{\F_{q}}/B_{\F_{q}}$ is a regular Galois cover.

We claim that $k'/k$ depends only on $n$ and $d$. Indeed, $k'$ is the algebraic closure of $k_1$ inside $k_1(Z)$ and so, $[k':k_1]$ is bounded above by $[k_1(Z):k_1(B)]$ because $k_1(B)$ is the rational function field over $k_1$ (since $B$ is isomorphic to $\bP^1$) and so, $k_1$ is closed inside $k_1(B)$. Moreover, $Z/B$ is the Galois closure of $W/B$. As $W/B$ has degree $n(d-1)^{n-1}$, it follows that $Z/B$ has degree bounded above by $(n(d-1)^{n-1})!$. We deduce that $[k':k_1]$ is uniformly bounded solely in terms of $n$ and $d$. This shows that the extension $k'/k_1$ and therefore also $k'/k$ depends only on $n$ and $d$.

For the rest of the proof, let $\F_q\supseteq k'$ be any finite field. Let $G$ be the Galois group of $Z_{\F_q}/B_{\F_q}$; we view $G$ as a subgroup of $S_m$.

We will apply \cite{DL12}*{Lemma~3.4} to the map $f: W_{\F_q} \to B_{\F_q}$ in order to obtain a point $P\in B(\F_q)$ with the property that no point in $f^{-1}(P)$ is contained in $W(\F_q)$. 

We need first a cyclic subgroup $H$ of $G$ generated by an element $\sigma\in S_m$ with the property that $\sigma$ fixes no element in $\{1,\dots, m\}$ (note that $m>1$). Indeed, for any Galois group $G$ (seen as a subgroup of $S_m$), there exists an element $\sigma\in G$ which has no fixed point in $\{1,\dots, m\}$ because $G$ is a transitive group, which means that the stabilizers of the elements in $\{1,\dots,m\}$ are all conjugated and finally,
no group is a union of conjugates of a given proper subgroup.

So, we let $H$ be a cyclic subgroup of $G$ generated by an element $\sigma$ which has no fixed points (as above); we let $r$ be the number of all cycles appearing in $\sigma\in S_m$. We consider the \'etale $\F_{q}$-algebra $\prod_{\ell=1}^r E_\ell$, where the $E_\ell$'s are field extensions of $\F_{q}$ of degrees equal to the orders of the cycles appearing in the permutation $\sigma$. Then we apply \cite{DL12}*{Lemma~3.4} to the \'etale algebra $\prod_{\ell=1}^r E_\ell/\F_{q}$ to obtain a point $P\in B(\F_{q})$ with the property that $f^{-1}(P)$ splits into $r$ Galois orbits of order $[E_\ell:\F_{q}]$; in particular, none of the points in $f^{-1}(P)$ would be contained in $W(\F_{q})$ since each of these Galois orbits would have cardinality larger than $1$ (because $\sigma$ does not have fixed points).  

Now, the hypothesis in applying \cite{DL12}*{Lemma~3.4} is satisfied because the (const/comp) condition from \cite{DL12}*{Section~3.1.1} is automatically satisfied for regular covers. We need to check the following two conditions, namely \cite{DL12}*{Lemma 3.4, conditions (ii)-1 and (ii)-2}:
\begin{enumerate}
\item This condition is automatically satisfied for large $q$, because the Lang-Weil bounds for the number of points of curves defined over finite fields guarantees the existence of many rational points on the corresponding twisted covers of $Z$, which are curves of the same genus as the genus of $Z$ (see also the proof of \cite{DL12}*{Corollary~4.3}). Note that $q$ can be made to be sufficiently large by extending the field $k'$ even further in a way so that $[k':k]$ would still only depend on $n$ and $d$; indeed, Lang-Weil bounds apply once $q$ is larger than some function of the genus of $Z$. Since $Z$ is a degree $\delta$ cover of $\bP^1$, where $\delta$ is bounded above solely in terms of $d$ and $n$, it follows that the genus of $Z$ is also bounded solely in terms of $d$ and $n$.

\item This condition is satisfied as explained in the discussion regarding cyclic specializations (since our group $H$ is cyclic) on \cite{DL12}*{p.~153}.
\end{enumerate}
Therefore, \cite{DL12}*{Lemma~3.4} yields the existence of a point $P\in B(\F_{q})$ such that no point in $f^{-1}(P)$ is contained in $W(\F_{q})$, concluding the proof of Theorem~\ref{main-theorem}. 
\end{proof}

\begin{remark}
\label{rem:s larger than 1}
In our proof of Theorem~\ref{main-theorem} we used that the ground field $k$ may have to be replaced by $k'$ when considering the Galois closure $Z/B$ for the cover $W/B$ since we want that $Z$ be geometrically irreducible (over $k'$). Note that there are covers of degree larger than $1$ of geometrically irreducible curves $W/B$ (over $k$) for which each $k$-point of $B$ has a preimage contained in $W(k)$, thus contradicting the conclusion we seek for the strategy of our proof of Theorem~\ref{main-theorem}.

Indeed, we let $k=\F_q$ and $W=B=\bP^1_{\F_q}$ for some prime power $q$ satisfying the congruence equation $q\equiv 2\pmod{3}$ and then let $f:\bP^1\longrightarrow \bP^1$ be given by $x\mapsto x^3$. Clearly, $f$ induces a permutation of $\bP^1_{\F_q}$; so, each point in $B(\F_q)$ has a preimage contained in $W(\F_q)$. On the other hand, the Galois closure of this cover is $Z=\bP^1_{\F_{q^2}}$, i.e., we need to perform a base extension of our ground field in order for the Galois cover be geometrically irreducible. Once we replace $q$ by $q^2$, then $W_{\F_{q^2}}/B_{\F_{q^2}}$ is actually a regular Galois cover and then it is true that there exist points $P\in B(\F_{q^2})$ such that no point in $f^{-1}(P)$ is contained in $W(\F_{q^2})$.  
\end{remark}

We do not know whether one can choose $k'=k$ in Theorem~\ref{main-theorem} in general, as our proof strategy requires a base extension (see Remark~\ref{rem:s larger than 1}). It might be reasonable to expect that if the cardinality of the ground field $k$ is sufficiently large (depending only on $n$ and $d$), then one does not require an additional field extension. For example, the following result establishes that $k'=k$ works for the case of pencil of plane conics (as long as $\#k \ge 16$).

\begin{proposition}
\label{prop:plane conics}
Suppose that we have a pencil of reduced conics in $\bP^2$ defined over $\F_q$ such that the pencil admits at least one smooth member over $\overline{\F_q}$. Provided that $q\geq 16$, we can find an $\F_q$-line $L$ that is simultaneously transverse to all the conics defined over $\F_q$
in the pencil.
\end{proposition}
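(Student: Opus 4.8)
The plan is to rephrase the problem as one about a degree-two cover and then cut it down to a curve. Fix quadratic forms $F_{1},F_{2}$ defining two members of the pencil; since $q\ge 16>3$ and the pencil has at most three singular members, we may choose $C_{1}=\{F_{1}=0\}$ smooth and $\F_q$-rational, with associated symmetric matrix $A_{1}$ (I assume $p\neq 2$ throughout; $p=2$ would need separate handling). Identify a line $L$ with its coefficient vector $y\in(\bP^{2})^{\ast}$. The parameters $\zeta=[s:t]$ for which $L$ is \emph{not} transverse to $C_{\zeta}$ are exactly the zeros on $\bP^{1}$ of the binary quadratic $g_{y}(s,t)=y^{\mathsf T}\operatorname{adj}(sA_{1}+tA_{2})\,y=\alpha(y)s^{2}+\beta(y)st+\gamma(y)t^{2}$, whose coefficients $\alpha,\beta,\gamma$ are quadratic forms in $y$, because $L$ meets $C_{\zeta}$ transversally precisely when $(sF_{1}+tF_{2})|_{L}$ is a nondegenerate binary quadratic. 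Hence $L$ is transverse to all the $\F_q$-members $C_{\zeta}$ ($\zeta\in\bP^{1}(\F_q)$) if and only if $g_{y}$ has no zero in $\bP^{1}(\F_q)$; checking the degenerate case $\alpha(y)=0$ separately (there $\Delta(y):=\beta(y)^{2}-4\alpha(y)\gamma(y)=\beta(y)^{2}$ is a square), this happens exactly when $\Delta(y)$ is a nonzero non-square in $\F_q$ --- with the understanding that a point of the finite set $Z=\{\alpha=\beta=\gamma=0\}$ of lines non-transverse to every member is always bad. Since $\deg\Delta=4$ is even, "$\Delta(y)$ is a non-square" is well defined on $\bP^{2}(\F_q)$.

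The one structural input is that $\Delta$ is not a perfect square. As $\pi_{1}\colon V\to(\bP^{2})^{\ast}$ is generically $2$-to-$1$ (Lemma~\ref{lem:degree} with $n(d-1)^{n-1}=2$), this is equivalent to $V=\{(H,\zeta):H\text{ non-transverse to }C_{\zeta}\}$ being geometrically irreducible, which follows from the argument in the proof of Theorem~\ref{main-theorem}: a smooth member over $\overline{\F_q}$ makes the generic $\pi_{2}$-fibre an irreducible dual conic, and the hypothesis that the conics are reduced --- i.e.\ no member is a double line, i.e.\ condition (T) holds --- prevents $V$ from having a component lying over a single point of $\bP^{1}$. (That a double-line member $2\ell$ is non-transverse to every line but $\ell$ also shows why "reduced" is needed for the conclusion; alternatively one can argue directly that a square $\Delta$ forces the base locus of the pencil to be $2P+2Q$ or $4P$, hence a double line in the pencil or no smooth member.)

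Next I would restrict to a line. Pick a general $\F_q$-line $B\subset(\bP^{2})^{\ast}$ avoiding $Z(\F_q)$ (at most $4(q+1)$ lines meet $Z(\F_q)$) and general enough that $W_{B}:=\pi_{1}^{-1}(B)$ is geometrically irreducible --- by Bertini irreducibility applied to the dominant morphism $\pi_{1}$ of the irreducible surface $V$, this rules out only $O(q)$ lines, so a valid $B$ exists for $q$ not too small. Then $W_{B}\to B\cong\bP^{1}$ is a geometrically irreducible double cover branched at the zeros of $\Delta|_{B}$, hence at $\le 4$ points, so its smooth model $\widetilde{W}_{B}$ has genus $\le 1$. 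Because $B\cap Z(\F_q)=\emptyset$, a point $P\in B(\F_q)$ yields a common transverse line exactly when $W_{B}$ has no $\F_q$-point over it, i.e.\ when $\Delta|_{B}(P)$ is a nonzero non-square; so the number of such $P$ on $B$ equals
\[
\tfrac12\Bigl(\#\{P\in B(\F_q):\Delta|_{B}(P)\neq 0\}-\sum_{P\in B(\F_q)}\chi\bigl(\Delta|_{B}(P)\bigr)\Bigr),\qquad\chi=\text{quadratic character}.
\]
The first summand is $\ge q-3$; the second equals $\#\widetilde{W}_{B}(\F_q)-(q+1)$ up to an $O(1)$ error (from ramification and the non-reduced part of $\Delta|_{B}$), so is at most $2\sqrt q+O(1)$ in absolute value by Hasse--Weil. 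The count is therefore $\ge\tfrac12\bigl(q-2\sqrt q-O(1)\bigr)$, which is positive as soon as $q$ passes an explicit small threshold; bookkeeping the constants yields $q\ge 16$.

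The hard part is this last, numerical, step: reaching exactly $q\ge 16$ forces one to keep every error sharp. The Hasse bound $2\sqrt q$ is already optimal, but one must handle with care the points of $B$ where $\Delta|_{B}$ vanishes or has a repeated factor --- when the squarefree part of $\Delta|_{B}$ has degree $2$ the relevant curve has genus $0$ and the constraint on $q$ is far weaker, so only the squarefree degree-$4$ case is binding --- and one must verify that an $\F_q$-line $B$ with all the required genericity actually exists for every $q\ge 16$. The remaining loose ends --- the "$\Delta$ not a square" dichotomy and the characteristic-two case --- are secondary.
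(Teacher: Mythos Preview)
Your reformulation via the quartic discriminant $\Delta$ is correct, and $\{\Delta=0\}$ is exactly the paper's branch curve $\mathcal{B}$. From this point on, however, the paper takes a far more elementary route: it never restricts to a line $B$, never invokes Hasse--Weil, and never needs a Bertini irreducibility statement for the restriction. Instead it counts bad lines directly on all of $(\bP^2)^{\ast}$ by double counting: each of the $q+1$ members has at most $q+1$ non-transverse $\F_q$-lines; a generic bad line is non-transverse to two members and hence counted twice; the locus $\mathcal{B}=\{\Delta=0\}$ of lines counted only once is a plane curve of degree $4$ with at most $4q+1$ rational points (Lemma~\ref{lemma-rat-points-on-plane-curves}); and the at most three singular members contribute at most $3(q+1)$ further lines. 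This yields the explicit upper bound $\tfrac{(q+1)^2}{2}+(4q+1)+3(q+1)$ for the number of bad lines, and comparison with $q^2+q+1$ gives $q\ge 16$ at once.

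Your route through a one-parameter family $B$ and a character sum, visibly modelled on the proof of Theorem~\ref{main-theorem}, trades this elementary count for two genuine obligations that you do not discharge: (i) exhibiting, for \emph{every} $q\ge 16$, an $\F_q$-line $B$ with $B\cap Z(\F_q)=\emptyset$ and $\Delta|_B$ not a square (the structure of $\{\Delta=0\}$ varies with the pencil---for instance if $\Delta$ acquires a square factor the bad $B$'s already fill out a curve in the dual plane, so ``$O(q)$ bad lines'' needs a uniform constant you have not supplied); and (ii) controlling the $O(1)$ losses in the Hasse--Weil step tightly enough to land at $q\ge 16$ rather than some larger threshold. You flag both as loose ends; they are real gaps, and the paper's argument avoids them entirely. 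Incidentally, your own observation that the good lines are exactly those with $\Delta(y)$ a nonzero non-square, together with the fact that every reduced $\F_q$-conic has \emph{exactly} $q+1$ non-transverse $\F_q$-lines, already yields a direct count on $(\bP^2)^{\ast}$ without ever choosing $B$---so the restriction to a line is a detour even within your own framework. Finally, your exclusion of characteristic $2$ is unnecessary for the statement: the paper's counting argument works uniformly in the characteristic.
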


\begin{proof}
Suppose that $C_1 = \{F_1=0\}$ and $C_2=\{F_2=0\}$ are the two conics that generate the pencil. 

We start with some general considerations regarding our proof strategy. First, we observe that if $C$ is a non-smooth reduced conic, then it means that $C$ is a union of two lines $L_1\cup L_2$ (over $\overline{\F_q}$) and therefore, we have at most $q+1$ lines defined over $\F_q$ which are non-transverse to $C$ (they would correspond to all the $\F_q$-lines passing through the $\F_q$-point of $L_1\cap L_2$).  Second, we note that if $C$ is any smooth conic defined over $\F_q$, then the only possibility for an $\F_q$-line $L$ be non-transverse to $C$ is for $L$ be tangent to $C$ at an $\F_q$-point (since otherwise, we would have that $L$ is tangent to $C$ at two $\overline{\F_q}$-points, contradiction). In particular, if $C$ is a smooth conic which has no $\F_q$-point, then any $\F_q$-line is transverse to $C$. On the other hand, the number of $\F_q$-points on a smooth $\F_q$-conic (which has at least one $\F_q$-point) is $q+1$ (since then the conic would be isomorphic to $\bP^1$ over $\F_q$); furthermore, each such $\F_q$-point has a tangent line defined over $\F_q$. This provides at most $(q+1)\cdot (q+1)$ lines defined over $\F_q$, which are non-transverse to at least one element of the given $\F_q$-pencil. This number is an overestimate since there are only $q^2+q+1$ lines defined over $\F_q$, and so there is overcounting that needs to be addressed. In order to refine the counting for the number of non-transverse $\F_q$-lines, we need to take into account the fact that a given $\F_q$-line $L$ will be non-transverse to more than one conic.

In the set-up of the proof for the Theorem~\ref{main-theorem}, we have the map $\pi_1: V\to (\bP^2)^{\ast}$. Given a line $L\in (\bP^2)^{\ast}$, the fiber $\pi_1^{-1}(L)$ is either a $\bP^1$ or consists of $2$ conics according to Lemma~\ref{lem:degree}. In the first case, the line $L$ is non-transverse to every element of pencil, and in the second case $L$ is non-transverse to exactly $2$ conics (counted with multiplicity). In most cases, we see that each non-transverse $\F_q$-line is counted at least twice. However,  there is a locus $\mathcal{B}\subset (\bP^2)^{\ast}$ consisting of those lines $L\in (\bP^2)^{\ast}$ which are tangent to exactly one conic (with multiplicity 2) in the pencil. We claim that $\mathcal{B}$ is a plane curve of degree $4$. 

The variety $V \subset \bP^1 \times (\bP^2)^{\ast}$ can be described as the locus $\{R(s, t, a, b, c) = 0\}$ which has bidegree $(2, 2)$, that is, degree $2$ in variables $s, t$ and degree $2$ in variables $a, b, c$.  The two roots $[s:t]\in \bP^1$ satisfying $R(s, t, a, b, c)=0$ exactly correspond to those members of the pencil to which a given line $L=\{ax+by+cz=0\}$ is non-transverse. The condition that these two roots coincide is controlled by the vanishing of the discriminant $D$ of $R(s,t,a,b,c)$ when $R$ is viewed as a homogeneous quadratic polynomial in $s$ and $t$. Note that $D=D(a,b,c)$ is a degree $4$ homogeneous polynomial in $a, b, c$. By definition, $\mathcal{B}=\{D=0\}$ and so $\deg(\mathcal{B})=4$. By Lemma~\ref{lemma-rat-points-on-plane-curves}, we have $\# \mathcal{B}(\F_q)\leq 4q+1$, and so there are at most $4q+1$ lines over $\F_q$ which are non-transverse to a single conic (with multiplicity $2$) in the pencil.

Finally, there are at most three distinct singular conics in a given pencil of conics by  \cite{EH16}*{Proposition 7.4}. Each such conic is a union of two lines, and the only lines that are not transverse are the $\F_q$-lines passing through the singular point. Thus, there are at most $3(q+1)$ non-transverse lines arising from the singular conics in the pencil.

In total, the number of non-transverse $\F_q$-lines to the $\F_q$-members of the pencil is at most $\frac{(q+1)^2}{2}  + 4q+1 + 3(q+1)$. Since the number of $\F_q$-lines is $q^2+q+1$, we get a simultaneously transverse $\F_q$-line provided that,
$$
q^2 + q + 1 > \frac{(q+1)^2}{2}  + 4q+1 + 3(q+1)
$$
The inequality above is equivalent to $q^2>14q+7$ which is true for $q\geq 16$.
\end{proof}

\begin{bibdiv}
\begin{biblist}

\bib{Asg19}{article}{
    AUTHOR = {Asgarli, Shamil},
     TITLE = {Sharp {B}ertini theorem for plane curves over finite fields},
   JOURNAL = {Canad. Math. Bull.},
  FJOURNAL = {Canadian Mathematical Bulletin. Bulletin Canadien de Math\'{e}matiques},
    VOLUME = {62},
      YEAR = {2019},
    NUMBER = {2},
     PAGES = {223--230},
      ISSN = {0008-4395},
   MRCLASS = {14H50 (11G20 14N05)},
  MRNUMBER = {3952512},
MRREVIEWER = {Mariana de Almeida Nery Coutinho},
       DOI = {10.4153/cmb-2018-018-0},
       URL = {https://doi-org.ezproxy.library.ubc.ca/10.4153/cmb-2018-018-0},
}

\bib{ADL19}{article}{
       author = {Asgarli, Shamil},
       author ={Duan, Lian},
       author = {Lai, Kuan-Wen},
        title = {Transverse lines to surfaces over finite fields},
      journal = {arXiv e-prints},
      FJOURNAL = {},
     keywords = {Mathematics - Algebraic Geometry},
         year = {2019},
        month = {mar},
          eid = {arXiv:1903.08845},
        pages = {arXiv:1903.08845},
      archivePrefix = {arXiv},
       eprint = {https://arxiv.org/abs/1903.08845},
     primaryClass = {math.AG},
       adsurl = {https://ui.adsabs.harvard.edu/abs/2019arXiv190308845A},
      adsnote = {Provided by the SAO/NASA Astrophysics Data System}
}

\bib{Bal03}{article}{
    AUTHOR = {Ballico, Edoardo},
     TITLE = {An effective {B}ertini theorem over finite fields},
   JOURNAL = {Adv. Geom.},
    VOLUME = {3},
      YEAR = {2003},
    NUMBER = {4},
     PAGES = {361--363},
      ISSN = {1615-715X}
}

\bib{Cou16}{article}{
    AUTHOR = {Couvreur, Alain},
     TITLE = {An upper bound on the number of rational points of arbitrary
              projective varieties over finite fields},
   JOURNAL = {Proc. Amer. Math. Soc.},
  FJOURNAL = {Proceedings of the American Mathematical Society},
    VOLUME = {144},
      YEAR = {2016},
    NUMBER = {9},
     PAGES = {3671--3685},
      ISSN = {0002-9939},
   MRCLASS = {11G25 (14J20)},
  MRNUMBER = {3513530},
MRREVIEWER = {Mrinmoy Datta},
       DOI = {10.1090/proc/13015},
       URL = {https://doi-org.ezproxy.library.ubc.ca/10.1090/proc/13015},
}

\bib{DL12}{incollection}{
    AUTHOR = {D\`ebes, Pierre},
    AUTHOR = {Legrand, Fran\c{c}ois},
     TITLE = {Twisted covers and specializations},
 BOOKTITLE = {Galois-{T}eichm\"{u}ller theory and arithmetic geometry},
    SERIES = {Adv. Stud. Pure Math.},
    VOLUME = {63},
     PAGES = {141--162},
 PUBLISHER = {Math. Soc. Japan, Tokyo},
      YEAR = {2012},
   MRCLASS = {12E30 (11R58 14H30)},
  MRNUMBER = {3051242},
MRREVIEWER = {Teresa Crespo},
       DOI = {10.2969/aspm/06310141},
       URL = {https://doi-org.ezproxy.library.ubc.ca/10.2969/aspm/06310141},
}

\bib{EH16}{book}{
    AUTHOR = {Eisenbud, David},
    AUTHOR ={Harris, Joe},
     TITLE = {3264 and all that---a second course in algebraic geometry},
 PUBLISHER = {Cambridge University Press, Cambridge},
      YEAR = {2016},
     PAGES = {xiv+616},
      ISBN = {978-1-107-60272-4; 978-1-107-01708-5},
   MRCLASS = {14-01 (14C15 14M15 14N10)},
  MRNUMBER = {3617981},
MRREVIEWER = {Arnaud Beauville},
       DOI = {10.1017/CBO9781139062046},
       URL = {https://doi-org.ezproxy.library.ubc.ca/10.1017/CBO9781139062046},
}

\bib{Kat99}{article}{
    AUTHOR = {Katz, Nicholas M.},
     TITLE = {Space filling curves over finite fields},
   JOURNAL = {Math. Res. Lett.},
  FJOURNAL = {Mathematical Research Letters},
    VOLUME = {6},
      YEAR = {1999},
    NUMBER = {5-6},
     PAGES = {613--624},
      ISSN = {1073-2780},
   MRCLASS = {11G20 (11G10 11G25 14G15)},
  MRNUMBER = {1739219},
MRREVIEWER = {Arnaldo L. P. Garc\'{\i}a},
       DOI = {10.4310/MRL.1999.v6.n6.a2},
       URL = {https://doi-org.ezproxy.library.ubc.ca/10.4310/MRL.1999.v6.n6.a2},
}

\bib{PS20}{article}{
       author = {Poonen, Bjorn},
       author = {Slavov, Kaloyan},
        title = {The exceptional locus in the Bertini irreducibility theorem for a morphism},
      journal = {arXiv e-prints},
     keywords = {Mathematics - Algebraic Geometry, Mathematics - Number Theory, 14D05 (Primary) 14A10, 14G15 (Secondary)},
         year = {2020},
        month = {jan},
          eid = {arXiv:2001.08672},
        pages = {arXiv:2001.08672},
archivePrefix = {arXiv},
       eprint = {https://arxiv.org/abs/2001.08672},
 primaryClass = {math.AG},
       adsurl = {https://ui.adsabs.harvard.edu/abs/2020arXiv200108672P},
      adsnote = {Provided by the SAO/NASA Astrophysics Data System}
}

\bib{Ser91}{incollection}{
    AUTHOR = {Serre, Jean-Pierre},
     TITLE = {Lettre \`a {M}. {T}sfasman},
      NOTE = {Journ\'{e}es Arithm\'{e}tiques, 1989 (Luminy, 1989)},
   JOURNAL = {Ast\'{e}risque},
  FJOURNAL = {Ast\'{e}risque},
    NUMBER = {198-200},
      YEAR = {1991},
     PAGES = {11, 351--353 (1992)},
      ISSN = {0303-1179},
   MRCLASS = {14G15 (11G25 14G05)},
  MRNUMBER = {1144337},
MRREVIEWER = {Paulo Viana},
}

\end{biblist}
\end{bibdiv}

\end{document}